\begin{document}
\maketitle

\section{Introduction}

The Mittag-Leffler is an entire function which is defined by a power series\begin{equation*}
  E_{\rho}(z)=\sum_{k=0}^{\infty}\frac{z^k}{\Gamma(1+k/\rho)},\quad \rho>0,\quad z\in\C.
\end{equation*}
This function was introduced by Mittag-Leffler in a series of papers published from 1902-1905 in connection with his development of a method for summing divergent series. For more detailed information on the content of these works and on the history of the introduction of the Mittag-Leffler function, we refer the reader to the book \cite{Gorenflo2014} (see chapter 2 in \cite{Gorenflo2014}). The function $E_{\rho}(z)$ itself was introduced in the work \cite{Mittag-Leffler1903}.  An important generalization of this function
\begin{equation}\label{eq:MLF_gen}
  E_{\rho,\mu}(z)=\sum_{k=0}^{\infty}\frac{z^k}{\Gamma(\mu+k/\rho)},\quad \rho>0,\quad \mu\in\C,\quad z\in\C
\end{equation}
obtained by  A. Wiman  in 1905 \cite{Wiman1905,Wiman1905a} was developed in the works  \cite{zbMATH03082752,zbMATH03078845,zbMATH03081895}.

The interest in the Mittag-Leffler function is primarily associated with the use of this function for solving equations in fractional derivatives. At its core, this function is an eigenfunction of some operators of fractional differentiation and integration. This circumstance largely determines the scope of its use.   Numerous examples of the function use $E_{\rho,\mu}(z)$ for solving integral and differential equations of fractional order can be found in the books \cite{Gorenflo1997, Gorenflo2014, Gorenflo2019}.

The most important, from the point of view of studying asymptotic properties and from the point of view of numerical calculations, is the integral representation of the Mittag-Leffler function. There are several forms of writing the integral representation of the Mittag-Leffler function. One of the integral forms $E_{\alpha,\mu}(z)$ is given in the book \cite{Bateman_V3_1955} (see \S18.1) and it has the form
\begin{equation*}
  E_{\alpha,\mu}(z)=\frac{1}{2\pi i}\int_{C}\frac{\zeta^{\alpha-\mu}e^\zeta}{\zeta^\alpha-z}d\zeta,
\end{equation*}
where the contour of integration $C$ is the loop that starts and ends in $-\infty$ and embraces the circle $|t|\leqslant |\zeta|^{1/\alpha}$ in a positive direction: $-\pi\leqslant \arg t\leqslant\pi$. Here the parameter $\alpha$ is connected with the parameter $\rho$ in (\ref{eq:MLF_gen}) by the relation $\alpha=1/\rho$. Another form of integral representation was obtained in the work \cite{Dzhrbashyan1954_eng} and was studied in every detail in the book \cite{Dzhrbashyan1966_eng} (see chapter 3, \S 2 in \cite{Dzhrbashyan1966_eng}, and also \S 3.4 in \cite{Gorenflo2014}, and \S 1.3 in \cite{Dzhrbashian1993})
\begin{equation}\label{eq:MLF_int_Dzhr}
\begin{array}{c}
  \displaystyle E_{\rho,\mu}(z) =\frac{\rho}{2\pi i}\int_{\gamma}\frac{\exp\left\{\zeta^\rho\right\}\zeta^{\rho(1-\mu)}}{\zeta-z}d\zeta,\quad z\in G^{(-)}(\varepsilon,\delta),\\
  \displaystyle E_{\rho,\mu}(z) =\rho z^{\rho(1-\mu)}e^{z^\rho}+ \frac{\rho}{2\pi i}\int_{\gamma}\frac{\exp\left\{\zeta^\rho\right\}\zeta^{\rho(1-\mu)}}{\zeta-z}d\zeta,\quad  z\in G^{(+)}(\varepsilon,\delta),
\end{array}
\end{equation}
where the contour of integration $\gamma$ is composed of a half-line $\arg \zeta=-\delta$, $|\zeta|\geqslant\varepsilon$, an arc $-\delta\leqslant\arg \zeta\leqslant\delta$ a circle $|\zeta|=\varepsilon$ and a half-line $\arg\zeta=\delta$, $|\zeta|\geqslant\varepsilon$. Here $G^{(-)}(\varepsilon,\delta)$ and $G^{(+)}(\varepsilon,\delta)$ are the domains lying to the left and the right of the contour  $\gamma$, $\rho>1/2$, $\frac{\pi}{2\rho}<\delta\leqslant\min(\pi,\pi/\rho)$ respectively.

One more integral representation of the Mittag-Leffler function was presented in the works \cite{Saenko2019, Saenko2020}. In these works the following lemma was formulated and proved.

\begin{lemma}\label{lemm:MLF_int}  For any real $\rho, \delta_{1\rho}, \delta_{2\rho}, \epsilon$ which is such that $\rho>1/2$, $\frac{\pi}{2\rho}<\delta_{1\rho}\leqslant\min(\pi,\pi/\rho)$, $\frac{\pi}{2\rho}<\delta_{2\rho}\leqslant\min(\pi,\pi/\rho)$, $\epsilon>0$, any $\mu\in\C$ and any $z\in\C$ which is such that
\begin{equation}\label{eq:z_cond_lemm}
  \frac{\pi}{2\rho}-\delta_{2\rho}+\pi<\arg z<-\frac{\pi}{2\rho}+\delta_{1\rho}+\pi
\end{equation}
The Mittag-Leffler function can be represented in the form
\begin{equation}\label{eq:MLF_int}
  E_{\rho,\mu}(z)=\frac{\rho}{2\pi i} \int_{\gamma_\zeta}\frac{\exp\left\{(z\zeta)^{\rho}\right\}(z\zeta)^{\rho(1-\mu)}}{\zeta-1}d\zeta.
\end{equation}
where the contour of integration $\gamma_\zeta$ has the form
\begin{equation}\label{eq:loop_gammaZeta}
  \gamma_\zeta=\left\{\begin{array}{ll}
                       S_1=&\{\zeta: \arg\zeta=-\delta_{1\rho}-\pi,\quad |\zeta|\geqslant 1+\epsilon\},\\
                       C_\epsilon=&\{\zeta: -\delta_{1\rho}-\pi\leqslant\arg\zeta\leqslant\delta_{2\rho}- \pi,\quad |\zeta|=1+\epsilon\},\\
                       S_2=&\{\zeta: \arg\zeta=\delta_{2\rho}-\pi,\quad|\zeta|\geqslant1+\epsilon\}.
                     \end{array}\right.
\end{equation}
\end{lemma}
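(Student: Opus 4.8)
The plan is to obtain (\ref{eq:MLF_int}) from the Dzhrbashyan representation (\ref{eq:MLF_int_Dzhr}) by the change of variable that replaces the old integration variable with $z$ times a new variable $\zeta$. I would start from the first line of (\ref{eq:MLF_int_Dzhr}), valid for $z\in G^{(-)}(\varepsilon,\delta)$ and carrying no residue term, since the target formula (\ref{eq:MLF_int}) also has none. Under the substitution one has $\exp\{\zeta^\rho\}\mapsto\exp\{(z\zeta)^\rho\}$, $\zeta^{\rho(1-\mu)}\mapsto(z\zeta)^{\rho(1-\mu)}$, the denominator $\zeta-z\mapsto z\zeta-z=z(\zeta-1)$, and $d\zeta\mapsto z\,d\zeta$; the two factors of $z$ cancel and reproduce exactly the integrand of (\ref{eq:MLF_int}). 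The powers are read through the principal branch with $\arg(z\zeta)=\arg z+\arg\zeta$, and a preliminary task is to check that on the image contour this combined argument stays in the interval of single-valuedness used for $\zeta^\rho$ on $\gamma$, so that the substitution introduces no spurious branch.

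Next I would follow the contour. The image of the Dzhrbashyan loop $\gamma$ under $\zeta_{\mathrm{old}}=z\zeta$ is $\gamma/z$: its rays $\arg=\pm\delta$ move to $\arg\zeta=\pm\delta-\arg z$ and its arc of radius $\varepsilon$ to radius $\varepsilon/|z|$. By Cauchy's theorem this loop may then be deformed---swinging each ray to its own angle and rescaling the arc---onto the loop $\gamma_\zeta$ of (\ref{eq:loop_gammaZeta}), as long as the deformation crosses neither the pole at $\zeta=1$ nor the branch cut and the connecting arcs at infinity vanish. This freedom is exactly what lets the two independent angles $\delta_{1\rho},\delta_{2\rho}$ and the radius $1+\epsilon>1$ appear, the latter ensuring that $C_\epsilon$ passes outside the pole.

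I expect the restriction (\ref{eq:z_cond_lemm}) to emerge as the condition that the integral converges along the two rays after deformation. On $S_2$, where $\arg\zeta=\delta_{2\rho}-\pi$, one has $\arg(z\zeta)=\arg z+\delta_{2\rho}-\pi$, and decay of $\exp\{(z\zeta)^\rho\}$ at infinity (i.e. $\mathrm{Re}\,(z\zeta)^\rho\to-\infty$) requires $\rho(\arg z+\delta_{2\rho}-\pi)>\pi/2$, that is $\arg z>\frac{\pi}{2\rho}-\delta_{2\rho}+\pi$; the same estimate along $S_1$, where $\arg\zeta=-\delta_{1\rho}-\pi$, gives $\arg z<-\frac{\pi}{2\rho}+\delta_{1\rho}+\pi$, and the two together are precisely (\ref{eq:z_cond_lemm}). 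The upper bounds $\delta_{1\rho},\delta_{2\rho}\leqslant\min(\pi,\pi/\rho)$ keep $\rho\arg(z\zeta)\leqslant\pi$ on the rays, so that the exponential genuinely decays and the rays are not swung across the cut.

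The step I expect to be the main obstacle is the simultaneous bookkeeping of the branch cut and of the side of the contour on which the pole lies. I must verify that, throughout the rotation and rescaling, the point $\zeta=1$ stays in the same component of $\C\setminus\gamma_\zeta$ as the image of $z\in G^{(-)}$---the component reachable from $+\infty$ along the positive real axis, which $\gamma_\zeta$ does not block---so that no residue is picked up and the single formula (\ref{eq:MLF_int}) results rather than the two-line form (\ref{eq:MLF_int_Dzhr}). Confirming that (\ref{eq:z_cond_lemm}) guarantees this, and that the principal branch of $(z\zeta)^\rho$ coincides with the branch of $\zeta^\rho$ taken on $\gamma$, is the delicate part; by comparison the algebraic transformation of the integrand is routine.
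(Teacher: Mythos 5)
First, a point of comparison: the paper itself contains no proof of this lemma --- it is quoted from \cite{Saenko2019,Saenko2020} and the proof is explicitly deferred to \cite{Saenko2020} --- so your derivation cannot be matched against an in-paper argument. More importantly, your chosen starting point is the one the paper goes out of its way to undermine: the Conclusion states that the Dzhrbashyan representation (\ref{eq:MLF_int_Dzhr}) is partly erroneous and that the present lemma was proved independently precisely to replace it. You restrict yourself to the first line ($z\in G^{(-)}$), which the paper asserts ``will be true,'' so the route is not circular, but it makes the lemma's validity contingent on a careful re-verification of the very source being corrected; the error in \cite{Dzhrbashyan1966_eng} arose exactly in the kind of contour/branch bookkeeping you are relying on, so this step cannot be taken on faith.

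Beyond that foundational concern, two concrete gaps remain open in your sketch. (i) The domain mismatch: line one of (\ref{eq:MLF_int_Dzhr}) involves a single aperture $\delta$ and the condition $z\in G^{(-)}(\varepsilon,\delta)$, whereas the target involves two independent angles $\delta_{1\rho},\delta_{2\rho}$ and the purely angular condition (\ref{eq:z_cond_lemm}). One must first choose $\delta$ as a function of $\arg z$ (so that $z$ actually lies in $G^{(-)}$, which forces $\delta<|\arg z|$), then rotate the two rays of the image contour $\gamma/z$ \emph{independently} to the angles $-\delta_{1\rho}-\pi$ and $\delta_{2\rho}-\pi$ and inflate the arc from radius $\varepsilon/|z|$ to $1+\epsilon$. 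Each intermediate position requires a Jordan-lemma estimate showing $\Re(z\zeta)^{\rho}\to-\infty$ on the connecting arcs at infinity, and it is exactly this uniform-decay requirement along the whole sweep --- not merely at the terminal angles, as you compute --- that produces (\ref{eq:z_cond_lemm}); you state the terminal inequalities but do not verify decay throughout the deformation. (ii) The branch bookkeeping: the substitution replaces $\zeta^{\rho(1-\mu)}$ by $(z\zeta)^{\rho(1-\mu)}$, but the identity $(z\zeta)^{w}=z^{w}\zeta^{w}$ fails for principal branches once $\arg z+\arg\zeta$ leaves $(-\pi,\pi]$, and on $\gamma_\zeta$ the argument of $\zeta$ ranges over $[-\delta_{1\rho}-\pi,\delta_{2\rho}-\pi]$, which it certainly does. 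You flag this as ``the delicate part'' but do not resolve it, and since $\mu$ is an arbitrary complex parameter (so $\zeta=0$ is generically a logarithmic branch point, as the Theorem of this paper shows), the choice of branch is not a formality: a wrong sheet changes the integrand by a nonconstant factor $e^{2\pi i k\rho(1-\mu)}$. Until both items are carried out, the proposal is a plausible programme rather than a proof.
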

The proof of this lemma can be found in  \cite{Saenko2020}. The singular points of the integrand of the representation (\ref{eq:MLF_int}) are investigated in this work.  Knowing the singular points will make it possible to circumvent by deforming the integration contour $\gamma_\zeta$ and, thus, it will allow us to calculate the integral included in (\ref{eq:MLF_int}). As one can see, the integral included in this representation in the general case cannot be analytically calculated. However, special cases will be given below, in which using the theory of residues it is possible to calculate this integral and represent $E_{\rho,\mu}(z)$ through elementary functions. In all other cases it is necessary to use numerical methods to calculate this integral.

\section{Singular points of the Mittag-Leffler function }

We represent the expression (\ref{eq:MLF_int}) in the form  $E_{\rho,\mu}(z)=\int_{\gamma_\zeta}\Phi_{\rho,\mu}(\zeta,z)d\zeta$, where
\begin{equation}\label{eq:Phi_fun}
  \Phi_{\rho,\mu}(\zeta,z)=\frac{\rho}{2\pi i}\frac{\exp\{(\zeta z)^\rho\}(\zeta z)^{\rho(1-\mu)}}{\zeta-1}.
\end{equation}
Then, taking account of the notation introduced, for the integral representation (\ref{eq:MLF_int}) the following theorem is true

\begin{theorem}\label{lemm:MLF_SingPoints}
For any real $\rho>1/2$ and any complex values of the parameter $\mu=\mu_R+i\mu_I$ the function $\Phi_{\rho,\mu}(\zeta,z)$, relative to the variable $\zeta$, has two singular points $\zeta=1$ and $\zeta=0$. The point $\zeta=1$ is a pole of the first order. The point $\zeta=0$ is:
\begin{enumerate}
\item the regular point of the function $\Phi_{\rho,\mu}(\zeta,z)$, at values of parameters $\rho=n$, where $n=1,2,3,\dots$ (an  integer positive number), $\mu_I=0$ and $\mu_R=1-m_1/\rho$, where $m_1=0,1,2,3,\dots$ (an integer positive number);
\item the pole of the order $m_2$, if $\rho=n$, where $n=1,2,3,\dots$ (an integer positive number), $\mu_I=0$, and $\mu_R=1+m_2/\rho$, where $m_2=1,2,3,\dots$ (an integer positive number);
\item the branch point, for all other values of parameters $\rho, \mu_I, \mu_R$.
\end{enumerate}
\end{theorem}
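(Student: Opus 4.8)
The plan is to analyze the function $\Phi_{\rho,\mu}(\zeta,z)$ factor by factor, since its singularities in $\zeta$ come from three sources: the exponential $\exp\{(\zeta z)^\rho\}$, the power factor $(\zeta z)^{\rho(1-\mu)}$, and the rational factor $1/(\zeta-1)$. The last factor immediately gives a first-order pole at $\zeta=1$, and this requires essentially no work. The substance of the theorem concerns the behavior at $\zeta=0$, so I would first dispose of $\zeta=1$ in one line and then concentrate entirely on the origin.

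For the analysis at $\zeta=0$, the key observation is that $1/(\zeta-1)$ is regular there, so the nature of the singularity is governed solely by $\exp\{(\zeta z)^\rho\}(\zeta z)^{\rho(1-\mu)}$. I would write $(\zeta z)^\rho = z^\rho \zeta^\rho$ and likewise for the power factor, treating $z$ as a fixed nonzero parameter (its argument constrained by the lemma so that the contour and branch choices are consistent). The exponential $\exp\{z^\rho \zeta^\rho\}$ is the delicate piece: I would expand it as its Taylor series $\sum_{k\ge 0} z^{\rho k}\zeta^{\rho k}/k!$, so that the full integrand near the origin behaves like $\zeta^{\rho(1-\mu)}\sum_k c_k \zeta^{\rho k}$, i.e.\ a series in powers $\zeta^{\rho(1-\mu)+\rho k}$. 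The classification at $\zeta=0$ then reduces to asking when these exponents $\rho(1-\mu)+\rho k = \rho(1-\mu+k)$ are all nonnegative integers (regular point), when the leading exponent is a negative integer (pole), and otherwise (branch point arising from a non-integer or complex power).

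The decisive step is to convert the three cases of the theorem into arithmetic conditions on the exponent $\rho(1-\mu)$. For the power $\zeta^{\rho(1-\mu)}$ to be single-valued one needs $\rho(1-\mu)\in\Z$; since $\mu=\mu_R+i\mu_I$, this forces $\mu_I=0$ and $\rho(1-\mu_R)\in\Z$. For the exponential's series to contribute only integer powers one additionally needs $\rho\in\Z$, say $\rho=n$; writing $\rho(1-\mu_R)=\pm m$ then splits into the regular case (when $1-\mu_R = m_1/\rho$ with $m_1\ge 0$, giving $\mu_R=1-m_1/\rho$ and a nonnegative leading power) and the pole case (when $1-\mu_R=-m_2/\rho$ with $m_2\ge 1$, giving $\mu_R=1+m_2/\rho$ and a leading power $-m_2$). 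Every remaining choice of $(\rho,\mu_I,\mu_R)$ leaves either a non-integer real power or a genuinely complex power $\zeta^{i\rho\mu_I}$, and hence a branch point.

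I expect the main obstacle to be arguing rigorously that the exceptional cases really are exhaustive and mutually exclusive, in particular handling the interaction of the two conditions ``$\rho\in\Z$'' (needed to control the exponential series) and ``$\rho(1-\mu)\in\Z$'' (needed to control the power factor). One must verify that if $\rho$ is non-integer then even when $\rho(1-\mu)$ happens to be an integer the exponential series $\sum_k \zeta^{\rho k}$ still introduces non-integer powers, forcing a branch point; and conversely that a complex power $\zeta^{i\rho\mu_I}$ with $\mu_I\ne 0$ can never be absorbed into integer powers. A careful treatment of $\zeta^{a+ib}=e^{(a+ib)\log\zeta}$ as a multivalued function, and of precisely when it fails to be single-valued around $\zeta=0$, is where the argument must be made airtight rather than merely formal.
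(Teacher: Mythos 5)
Your proposal is correct and reaches the same arithmetic classification as the paper (regularity/pole/branch point at $\zeta=0$ governed by whether $\rho\in\{1,2,\dots\}$, $\mu_I=0$, and $\rho(1-\mu_R)$ is a nonnegative or negative integer), but the mechanism you use at the origin differs from the paper's. The paper factors $\Phi_{\rho,\mu}=f_1f_2f_3$ with $f_1=\frac{\rho}{2\pi i}\exp\{(\zeta z)^\rho\}$, $f_2=(\zeta z)^{\rho(1-\mu)}$, $f_3=(1-\zeta)^{-1}$, writes $\zeta=re^{i\varphi}$, and tests each of $f_1$ and $f_2$ \emph{separately} for single-valuedness by comparing the real and imaginary parts at $\varphi$ and $\varphi+2\pi$ through explicit trigonometric expansions; it then superposes the two verdicts. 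You instead treat the product $\exp\{(\zeta z)^\rho\}(\zeta z)^{\rho(1-\mu)}$ jointly, expanding the exponential in its Taylor series to get a generalized power series with exponents $\rho(1-\mu)+\rho k$, and read off the classification from the integrality and sign of these exponents. Your route is more economical (no trigonometric bookkeeping) and, importantly, it is the one that naturally addresses the point you flag yourself: a product of two multivalued factors could in principle be single-valued even when neither factor is, so the paper's factorwise argument implicitly assumes no such cancellation occurs. Here none does --- circling the origin multiplies the product by $\exp\{(\zeta z)^\rho(e^{2\pi i\rho}-1)\}e^{2\pi i\rho(1-\mu)}$, which is identically $1$ only when $e^{2\pi i\rho}=1$ and $\rho(1-\mu)\in\Z$ --- and your joint series treatment makes that verification explicit rather than leaving it tacit. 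To be fully airtight you should carry out that one computation (and the analogous check that $\zeta^{-i\rho\mu_I}$ acquires the factor $e^{2\pi\rho\mu_I}\neq1$ when $\mu_I\neq0$), but the plan as stated is sound and delivers exactly the three cases of the theorem.
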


\begin{proof}
To simplify the proof we represent the function $\Phi_{\rho,\mu}(\zeta,z)$ in the form
\begin{equation}\label{eq:Phi_product}
  \Phi_{\rho,\mu}(\zeta,z)=f_1(\zeta,z)f_2(\zeta,z)f_3(\zeta),
\end{equation}
where
\begin{equation*}
  f_1(\zeta,z)=\frac{\rho}{2\pi i}\exp\{(\zeta z)^{\rho}\},\quad
  f_2(\zeta,z)=(\zeta z)^{\rho(1-\mu)},\quad
  f_3(\zeta)=(1-\zeta)^{-1}
\end{equation*}
and investigate each of these functions in terms of singular points.

We consider the function $f_1(\zeta,z)$. We will represent the variables $\zeta$ and $z$ in the form $\zeta=re^{i\varphi}$, $z=te^{i\psi}$. As a result, we have
\begin{multline*}
  f_1(\zeta,z)\equiv f_1(r,\varphi,t,\psi)=\frac{\rho}{2\pi i}\exp\left\{(rt)^\rho e^{i\rho(\varphi+\psi)}\right\}
  =\frac{\rho}{2\pi i}\exp\left\{(rt)^\rho(\cos(\rho(\varphi+\psi))+i\sin(\rho(\varphi+\psi)))\right\}\\
  = \frac{\rho}{2\pi i}\exp\left\{(rt)^\rho \cos(\rho(\varphi+\psi))\right\}
  \times\left[\cos\left((rt)^\rho\sin(\rho(\varphi+\psi))\right)+i\sin\left((rt)^\rho\sin(\rho(\varphi+\psi))\right)\right]\\
  =f_1^{Re}(r,\varphi,t,\psi)+i f_1^{Im}(r,\varphi,t,\psi),
\end{multline*}
where
\begin{align*}
  f_1^{Re}(r,\varphi,t,\psi) & =\Re f_1(t,\psi,r,\varphi)
  =\frac{\rho}{2\pi}\exp\left\{(rt)^\rho \cos(\rho(\varphi+\psi))\right\}\sin\left((rt)^\rho\sin(\rho(\varphi+\psi))\right),\\
   f_1^{Im}(r,\varphi,t,\psi)& =\Im f_1(t,\psi,r,\varphi)
   = -\frac{\rho}{2\pi}\exp\left\{(rt)^\rho \cos(\rho(\varphi+\psi))\right\}\cos\left((rt)^\rho\sin(\rho(\varphi+\psi))\right).
\end{align*}
The only singular point in the variable $\zeta$ of the function $f_1(\zeta,z)$ can be the point $\zeta=0$, which can be a branch point. A power law  $\zeta^\rho$ testifies to this fact. By definition, a branch point is the point circumventing which along the closed contour that embraces this point leads to new elements of the function. The verification of the presence of a branch point in the point $\zeta=0$ of the function $f(\zeta)$ is similar to the verification of the equality implementation $f(|\zeta|,\arg\zeta)= f(|\zeta|,\arg\zeta+2\pi)$. If this equality is not implemented, then in the point $\zeta=0$ there is a branch point of the function $f(\zeta)$. Thus, for the function $f_1(\zeta,z)$, we have
\begin{multline*}
  f_1^{Re}(r,\varphi+2\pi,t,\psi)
  =\frac{\rho}{2\pi}\exp\left\{(rt)^\rho \cos(\rho(\varphi+\psi+2\pi))\right\}\sin\left((rt)^\rho\sin(\rho(\varphi+\psi+2\pi))\right)=\\
  \frac{\rho}{2\pi}\exp\left\{(rt)^\rho\left[ \cos(\rho(\varphi+\psi))\cos(2\pi\rho)- \sin(\rho(\varphi+\psi))\sin(2\pi\rho)\right]\right\}\times\\ \sin\left((rt)^\rho\left[\sin(\rho(\varphi+\psi))\cos(2\pi\rho)+\cos(\rho(\varphi+\psi))\sin(2\pi\rho)\right]\right).
\end{multline*}
From here it can be seen that
\begin{equation*}
  f_1^{Re}(r,\varphi,t,\psi)=f_1^{Re}(r,\varphi+2\pi,t,\psi), \quad\mbox{If}\quad \rho=n,
\end{equation*}
and
\begin{equation*}
  f_1^{Re}(r,\varphi,t,\psi)\neq f_1^{Re}(r,\varphi+2\pi,t,\psi), \quad\mbox{If}\quad \rho\neq n,
\end{equation*}
where $n=1,2,3,\dots$ (an integer positive number).

Similarly, for $f_1^{Im}(r,\varphi,t,\psi)$ we obtain
\begin{multline*}
  f_1^{Im}(r,\varphi+2\pi,t,\psi)
  =-\frac{\rho}{2\pi}\exp\left\{(rt)^\rho\left[ \cos(\rho(\varphi+\psi))\cos(2\pi\rho)- \sin(\rho(\varphi+\psi))\sin(2\pi\rho)\right]\right\}\times\\
  \cos\left((rt)^\rho\left[\sin(\rho(\varphi+\psi))\cos(2\pi\rho)+\cos(\rho(\varphi+\psi))\sin(2\pi\rho)\right]\right).
\end{multline*}
From here one can see that
\begin{equation*}
  f_1^{Im}(r,\varphi,t,\psi)=f_1^{Im}(r,\varphi+2\pi,t,\psi), \quad\mbox{If}\quad \rho=n
\end{equation*}
and
\begin{equation*}
  f_1^{Im}(r,\varphi,t,\psi)\neq f_1^{Im}(r,\varphi+2\pi,t,\psi), \quad\mbox{If}\quad \rho\neq n.
\end{equation*}

Thus, for the function $f_1(\zeta,z)$ the point
\begin{equation}\label{statement_f1}
  \zeta=0\ \mbox{ is}\ \left\{\begin{array}{cc}
                                        \mbox{the regular point}, & \mbox{If}\ \rho=n,\\
                                        \mbox{the branch point}, & \mbox{If}\ \rho\neq n,
                                      \end{array}\right.
\end{equation}
where $n=1,2,3,\dots$.

We consider now the function $f_2(\zeta,z)$. Representing the variables $\zeta$ and $z$ in the form $\zeta=re^{i\varphi}$, $z=te^{i\psi}$ and a complex parameter $\mu=\mu_R+i\mu_I$, we obtain
\begin{multline*}
  f_2(\zeta,z)\equiv f_2(r,\varphi,t,\psi)= (rt)^{\rho(1-\mu_R-i\mu_I)}\exp\left\{i\rho(1-\mu_R-i\mu_I)(\varphi+\psi)\right\}\\
  =(rt)^{\rho(1-\mu_R)}\exp\left\{\rho\mu_I(\varphi+\psi)\right\}\exp\left\{i[\rho(1-\mu_R)(\varphi+\psi)-\rho\mu_I\ln(rt)]\right\}\\
  =(rt)^{\rho(1-\mu_R)}\exp\left\{\rho\mu_I(\varphi+\psi)\right\}
  \left[\cos(\rho(1-\mu_R)(\varphi+\psi)-\rho\mu_I\ln(rt))\right.\\
  \left.+i\sin(\rho(1-\mu_R)(\varphi+\psi)-\rho\mu_I\ln(rt))\right]=
  f_2^{Re}(r,\varphi,t,\psi)+if_2^{Im}(r,\varphi,t,\psi),
\end{multline*}
where
\begin{align*}
  f_2^{Re}(r,\varphi,t,\psi) &=\Re f_2(r,\varphi,t,\psi)
  =(rt)^{\rho(1-\mu_R)}e^{\rho\mu_I(\varphi+\psi)} \cos(\rho(1-\mu_R)(\varphi+\psi)-\rho\mu_I\ln(rt)), \\
  f_2^{Im}(r,\varphi,t,\psi)&=\Im f_2(r,\varphi,t,\psi)
  =(rt)^{\rho(1-\mu_R)}e^{\rho\mu_I(\varphi+\psi)}
  \sin(\rho(1-\mu_R)(\varphi+\psi)-\rho\mu_I\ln(rt)).
\end{align*}

We will study the behavior of the function $f_2(r,\varphi,t,\psi)$ in the vicinity of the point  $\zeta=0$. From the expression for $f_2^{Re}(r,\varphi,t,\psi)$ and $f_2^{Im}(r,\varphi,t,\psi)$ it is clear that they contain a multiplier $(rt)^{\rho(1-\mu_R)}$. At values $\mu_R>1$ the index of power of this multiplier becomes negative. As a result, at $\zeta\to0$ this multiplier increases unlimitedly. Therefore, there is the pole in the point $\zeta=0$ of the function $f_2(\zeta,z)$ at $\mu_R>1$. It should be pointed out that the multiplier of the form $\cos(\ln(rt))$ or $\sin(\ln(rt))$ leads to the appearance of an oscillating function at $r\to0$ is limited in modulus. We will verify if the point $\zeta=0$ is a branch point. We have
\begin{multline*}
  f_2^{Re}(r,\varphi+2\pi,t,\psi)= (rt)^{\rho(1-\mu_R)}e^{\rho\mu_I(\varphi+\psi+2\pi)} \cos(\rho(1-\mu_R)(\varphi+\psi+2\pi)-\rho\mu_I\ln(rt))\\
  (rt)^{\rho(1-\mu_R)}e^{\rho\mu_I(\varphi+\psi)} e^{2\pi\rho\mu_I} \left[\cos(\rho(1-\mu_R)(\varphi+\psi)-\rho\mu_I\ln(rt))\cos(\rho(1-\mu_R)2\pi)\right. -\\ \left.\sin(\rho(1-\mu_R)(\varphi+\psi)-\rho\mu_I\ln(rt))\sin(\rho(1-\mu_R)2\pi)\right]=\\
  f_2^{Re}(r,\varphi,t,\psi) e^{2\pi\rho\mu_I}\cos(2\pi\rho(1-\mu_R)) -\\
  (rt)^{\rho(1-\mu_R)}e^{\rho\mu_I(\varphi+\psi)} e^{2\pi\rho\mu_I} \sin(\rho(1-\mu_R)(\varphi+\psi)-\rho\mu_I\ln(rt))\sin(2\pi\rho(1-\mu_R)).
\end{multline*}
From this it is clear that  $f_2^{Re}(r,\varphi+2\pi,t,\psi)=f_2^{Re}(r,\varphi,t,\psi)$, if $\mu_I=0$ and $\rho(1-\mu_R)=m$, where $m=0,\pm1,\pm2,\pm3,\dots$ (integer numbers). In the case, if $\mu_I\neq0$, then for any $\rho>1/2$ and any  $\mu_R$, we obtain  $f_2^{Re}(r,\varphi+2\pi,t,\psi)\neq f_2^{Re}(r,\varphi,t,\psi)$. If $\mu_I=0$, but $\rho(1-\mu_R)\neq m$, we also obtain that $f_2^{Re}(r,\varphi+2\pi,t,\psi)\neq f_2^{Re}(r,\varphi,t,\psi)$.

Similarly, for the function $f_2^{Im}(r,\varphi,t,\psi)$, we have
\begin{multline*}
  f_2^{Im}(r,\varphi+2\pi,t,\psi)=f_2^{Im}(r,\varphi,t,\psi)e^{2\pi\rho\mu_I}\cos(2\pi\rho(1-\mu_R))+ \\
  (rt)^{\rho(1-\mu_R)}e^{\rho\mu_I(\varphi+\psi)} e^{2\pi\rho\mu_I} \cos(\rho(1-\mu_R)(\varphi+\psi)-\rho\mu_I\ln(rt))\sin(2\pi\rho(1-\mu_R)).
\end{multline*}
From this one can see that  $f_2^{Im}(r,\varphi+2\pi,t,\psi)=f_2^{Im}(r,\varphi,t,\psi)$, if $\mu_I=0$ and $\rho(1-\mu_R)=m$, where $m=0,\pm1,\pm2,\pm3,\dots$ (integer numbers). In the case, if $\mu_I\neq0$, then for any $\rho>1/2$ and any $\mu_R$ we obtain  $f_2^{Im}(r,\varphi+2\pi,t,\psi)\neq f_2^{Im}(r,\varphi,t,\psi)$. If $\mu_I=0$, but $\rho(1-\mu_R)\neq m$, and also obtain that  $f_2^{Im}(r,\varphi+2\pi,t,\psi)\neq f_2^{Im}(r,\varphi,t,\psi)$.

Combining these facts, we obtain that for the function $f_2(\zeta,z)$
\begin{equation}\label{statement_f2}
  \zeta=0\ \mbox{is}
  \left\{\begin{array}{ll}
      \mbox{the regular point}, & \left\{\begin{array}{l}\mbox{at}\ \mu_I=0\ \mbox{and}\ \rho(1-\mu_R)=m_1,
                                                \\\mbox{where}\ m_1=0,1,2,3,\dots,
                                                \end{array}\right.\\
      \mbox{the pole of order}\ m_2, &\left\{\begin{array}{l} \mbox{at}\ \mu_I=0\ \mbox{and}\ \rho(1-\mu_R)=-m_2,\\
                                                \mbox{where}\ m_2=1,2,3,\dots,
                                                \end{array}\right.\\
      \mbox{a branch point}, &\left\{
      \begin{array}{l}
            \mbox{If}\ \mu_I\neq0\ \mbox{and any}\ \mu_R\ \mbox{and}\ \rho>1/2,\\
            \mbox{If}\ \mu_I=0, \mbox{then for any}\ \rho>1/2\ \mbox{and}\\
                        \mu_R\ \mbox{such that}\ \rho(1-\mu_R)\neq m.\end{array}\right.
  \end{array} \right.
\end{equation}
Here, as above, $m=0,\pm1,\pm2,\pm3,\dots$.

We consider now the function $f_3(\zeta)$. Apparently this function has one singular point $\zeta=1$ which is the pole of the first order. Using now this result as well as the statement (\ref{statement_f1}) and (\ref{statement_f2}) for the function (\ref{eq:Phi_product}) we obtain  the statement of the theorem.
\end{proof}

As one can see from the theorem which has just been proved, at values of parameters $\rho=1, \mu_I=0$ and $\mu_R=0,\pm1,\pm2,\pm3,\dots$ the point $\zeta=1$ is the pole of the first order and the point $\zeta=0$ is either a regular point or a pole. As a result, at such values of parameters the integral in the representaion (\ref{eq:MLF_int}) can be calculated analytically. We formulate the obtained result in the form
\begin{corollary}\label{coroll:MLF_case_rho=1}
For values of parameters $\rho=1$, $\delta_{1\rho}=\delta_{2\rho}=\pi$, any complex $z$, which is such that $\pi/2<\arg z<3\pi/2$ and for integer real values of the parameter $\mu=n, n=0,\pm1,\pm2,\pm3,\dots$ the Mittag-Leffler function has the form:
\begin{enumerate}
\item If $n\leqslant1$ (i. e. $n=1,0,-1,-2,-3,\dots$), then
\begin{equation}\label{eq:MLF_mu<=1}
  E_{1,n}(z)=e^z z^{1-n},
\end{equation}

\item If $n\geqslant2$ (i. e. $n=2,3,4,\dots$), then
\begin{equation}\label{eq:MLF_mu>=2}
  E_{1,n}(z)=z^{1-n}\left(e^z-\sum_{k=0}^{n-2}\frac{z^k}{k!}\right),
\end{equation}
\end{enumerate}
\end{corollary}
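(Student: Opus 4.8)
The plan is to specialize the representation (\ref{eq:MLF_int}) to $\rho=1$, $\mu=n$ and to evaluate the resulting contour integral by residues, using the classification of the singular point $\zeta=0$ supplied by Theorem \ref{lemm:MLF_SingPoints}. Setting $\rho=1$ and $\delta_{1\rho}=\delta_{2\rho}=\pi$ in (\ref{eq:loop_gammaZeta}), the ray $S_1$ lies on $\arg\zeta=-2\pi$, the arc $C_\epsilon$ sweeps from $\arg\zeta=-2\pi$ to $\arg\zeta=0$, and the ray $S_2$ lies on $\arg\zeta=0$; geometrically $S_1$ and $S_2$ both coincide with the positive real half-line $|\zeta|\geqslant 1+\epsilon$, while $C_\epsilon$ is a full circle of radius $1+\epsilon$ that encloses both candidate singular points $\zeta=0$ and $\zeta=1$. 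The integrand reduces to
\begin{equation*}
  \Phi_{1,n}(\zeta,z)=\frac{1}{2\pi i}\frac{e^{z\zeta}(z\zeta)^{1-n}}{\zeta-1}=\frac{z^{1-n}}{2\pi i}\frac{e^{z\zeta}\zeta^{1-n}}{\zeta-1},
\end{equation*}
and since $n$ is an integer the power $\zeta^{1-n}$ is single-valued; by Theorem \ref{lemm:MLF_SingPoints} the point $\zeta=0$ is then regular when $n\leqslant 1$ and a pole of order $n-1$ when $n\geqslant 2$, while $\zeta=1$ is always a simple pole.

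First I would reduce the open loop to a closed contour. Because the integrand is single-valued, its values on $S_1$ and on $S_2$ agree as functions of $r=|\zeta|$ (on $S_1$ one has $\zeta=re^{-i2\pi}$, and $e^{-i2\pi}=1$ while $(z\zeta)^{1-n}$ picks up $e^{-i2\pi(1-n)}=1$ because $1-n$ is an integer), whereas the two rays are traversed in opposite senses: $S_1$ inward from $\infty$ to $1+\epsilon$, $S_2$ outward from $1+\epsilon$ to $\infty$. Their contributions therefore cancel exactly. The individual ray integrals converge precisely because the hypothesis $\pi/2<\arg z<3\pi/2$ forces $\Re(z\zeta)<0$ along the positive real axis, so $e^{z\zeta}$ decays there. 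Consequently $\int_{\gamma_\zeta}\Phi_{1,n}\,d\zeta$ equals the integral of $\Phi_{1,n}$ over the positively oriented circle $|\zeta|=1+\epsilon$, and the residue theorem gives
\begin{equation*}
  E_{1,n}(z)=\mathop{\mathrm{Res}}\limits_{\zeta=1}\frac{e^{z\zeta}(z\zeta)^{1-n}}{\zeta-1}+\mathop{\mathrm{Res}}\limits_{\zeta=0}\frac{e^{z\zeta}(z\zeta)^{1-n}}{\zeta-1},
\end{equation*}
the prefactor $\tfrac{1}{2\pi i}$ cancelling the $2\pi i$ produced by the theorem.

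Next I would compute the two residues. The residue at the simple pole $\zeta=1$ is immediate and equals $e^{z}z^{1-n}$. For $n\leqslant 1$ the second residue is absent (the origin is regular), which yields (\ref{eq:MLF_mu<=1}). For $n\geqslant 2$ the origin is a pole of order $n-1$, and I would extract the residue as the coefficient of $\zeta^{-1}$ in the Laurent expansion of $z^{1-n}e^{z\zeta}\zeta^{1-n}/(\zeta-1)$. Writing $\frac{e^{z\zeta}}{\zeta-1}=-\bigl(\sum_{k\geqslant 0}\frac{z^k\zeta^k}{k!}\bigr)\bigl(\sum_{j\geqslant 0}\zeta^j\bigr)=-\sum_{m\geqslant 0}\zeta^m\sum_{k=0}^{m}\frac{z^k}{k!}$ and multiplying by $\zeta^{1-n}$, the coefficient of $\zeta^{-1}$ comes from the term $m=n-2$, giving the residue $-z^{1-n}\sum_{k=0}^{n-2}z^k/k!$. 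Adding the two residues produces (\ref{eq:MLF_mu>=2}).

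The main obstacle is the contour step rather than the residue arithmetic: one must rigorously justify replacing the open loop $\gamma_\zeta$ by the closed circle $|\zeta|=1+\epsilon$. The cleanest route is the exact cancellation of $S_1$ and $S_2$ described above, which avoids any large-arc estimate altogether; the points requiring care are the single-valuedness of $(z\zeta)^{1-n}$ across $\arg\zeta=-2\pi$ and $\arg\zeta=0$ (which holds exactly because $1-n$ is an integer) and the bookkeeping of orientation, so that the resulting circle is positively oriented and encircles both $\zeta=0$ and $\zeta=1$ exactly once. Once these are in place, the evaluation splits into the two announced cases according to whether $\zeta=0$ contributes a residue.
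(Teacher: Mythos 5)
Your proposal is correct and follows essentially the same route as the paper: specialize (\ref{eq:MLF_int}) to $\rho=1$, $\delta_{1\rho}=\delta_{2\rho}=\pi$, observe via Theorem~\ref{lemm:MLF_SingPoints} that for integer $\mu=n$ the integrand is single-valued so the ray contributions over $S_1$ and $S_2$ cancel, reduce to the positively oriented circle $|\zeta|=1+\epsilon$, and apply the residue theorem at $\zeta=1$ (always a simple pole) and at $\zeta=0$ (absent for $n\leqslant 1$, a pole of order $n-1$ for $n\geqslant 2$). The one place you genuinely diverge is the residue at the origin: the paper extracts it from the formula $\mathrm{Res}=\frac{1}{(m_2-1)!}\lim_{\zeta\to0}\frac{d^{m_2-1}}{d\zeta^{m_2-1}}\frac{e^{\zeta z}}{\zeta-1}$ and proves the closed form of that derivative by a page-long induction, whereas you read the residue off as the coefficient of $\zeta^{-1}$ in the product of the Taylor series of $e^{z\zeta}$ and $-\sum_{j\geqslant0}\zeta^j$ with $\zeta^{1-n}$; your Laurent computation reaches the same value $-z^{1-n}\sum_{k=0}^{n-2}z^k/k!$ in a few lines and is the cleaner of the two. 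Your explicit remark that $\pi/2<\arg z<3\pi/2$ forces $\Re(z\zeta)<0$ on the rays, guaranteeing convergence of the individual ray integrals before they are cancelled, is a detail the paper leaves implicit and is worth stating.
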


\begin{proof}
According to the theorem~\ref{lemm:MLF_int} and using (\ref{eq:Phi_fun}) for the parameter value $\rho=1$ and representation (\ref{eq:MLF_int}) will take the form
\begin{equation}\label{eq:MLF_int_rho=1}
  %E_{1,\mu}(z)=\frac{1}{2\pi i}\int_{\gamma_\zeta}\frac{e^{\zeta z} (\zeta z)^{1-\mu}}{\zeta-1}d\zeta.
  E_{1,\mu}(z)=\int_{\gamma_\zeta}\Phi_{1,\mu}(\zeta,z)d\zeta.
\end{equation}
In this case, the conditions for values of parameters $\delta_{1\rho}$ and $\delta_{2\rho}$ take the form $\pi/2<\delta_{1\rho}\leqslant\pi$, $\pi/2<\delta_{2\rho}\leqslant\pi$. Since these parameters can take arbitrary values of these intervals, we choose $\delta_{1\rho}=\delta_{2\rho}=\pi$. This choice is determined by the fact that it is that under such values of parameters $\delta_{1\rho}$ and $\delta_{2\rho}$ the contour $\gamma_\zeta$ closes up and we can calculate the integral in (\ref{eq:MLF_int_rho=1}). As a result, the condition (\ref{eq:z_cond_lemm}) takes the form $\pi/2<\arg z<3\pi/2$, and the contour of integration  (\ref{eq:loop_gammaZeta}) is written in the form
\begin{equation}\label{eq:loop_gammaZeta_rho=1}
  \gamma_\zeta=\left\{\begin{array}{ll}
                       S_1=&\{\zeta: \arg\zeta=-2\pi,\quad |\zeta|\geqslant 1+\epsilon\},\\
                       C_\epsilon=&\{\zeta: -2\pi\leqslant\arg\zeta\leqslant0,\quad |\zeta|=1+\epsilon\},\\
                       S_2=&\{\zeta: \arg\zeta=0,\quad|\zeta|\geqslant1+\epsilon\}.
                     \end{array}\right.
\end{equation}

We represent a complex parameter $\mu$ in the form $\mu=\mu_R+i\mu_I$ and we use the theorem~\ref{lemm:MLF_SingPoints}. According to this theorem, the function $\Phi_{1,\mu}(\zeta,z)$ at values $\mu_I=0$  and $\mu_R=1-m_1$, where $m_1=0,1,2,3,\dots$ has one singular point $\zeta=1$, which is a pole of the first order. The point $\zeta=0$, in this case, is the regular point. In case, if $\mu_I=0$ and $\mu_R=1+m_2$, where $m_2=1,2,3,\dots$  the function $\Phi_{1,\mu}(\zeta,z)$  has two singular points: the point $\zeta=1$ is a pole of the first order and the point $\zeta=0$ is a pole of order $m_2$. As one can see, in both cases the point $\zeta=0$ is not a branch point. As a result, in these two cases, the function $\Phi_{1,\mu}(\zeta,z)$ is the entire function of a complex variable $\zeta$. From here it follows that when $\mu_I=0$, and $\mu_R=0,\pm1,\pm2,\pm3,\dots$ ($\mu_R$ takes  integer numerical values an arc of the circle $C_\epsilon$, entering the contour (\ref{eq:loop_gammaZeta_rho=1}) is nothing more than a closed circle of radius $1+\epsilon$, and the half-lines $S_1$ and $S_2$ go along the positive part of a real axis in mutually opposite directions. With all other values of the parameter $\mu$ (when $\mu_I\neq0$ or $\mu_R$ is not integer according to the theorem~\ref{lemm:MLF_SingPoints}, the point $\zeta=0$ is a branch point of the function $\Phi_{1,\mu}(\zeta,z)$. As a result, the circle $C_\epsilon$ of the contour (\ref{eq:loop_gammaZeta_rho=1}) will not close up and the half-lines $S_1$ and $S_2$ will go along the low and upper banks of the cut of  the complex plane  which passes along the positive part of a real axis.

It follows from the foregoing that in case when $\mu_I=0$ and $\mu_R=0,\pm1,\pm2,\pm3,\dots$ the integral in (\ref{eq:MLF_int_rho=1}), can be calculated with the use of the theory of residues. In fact, in this case  (\ref{eq:MLF_int_rho=1}) takes the form
\begin{equation}\label{eq:MLF_int_rho=1_mu=n}
  E_{1,\mu}(z)=\int_{S_1}\Phi_{1,\mu}(\zeta,z) d\zeta+\int_{C_\epsilon}\Phi_{1,\mu}(\zeta,z) d\zeta +\int_{S_2}\Phi_{1,\mu}(\zeta,z) d\zeta= \int_{C_\epsilon}\Phi_{1,\mu}(\zeta,z) d\zeta,
\end{equation}
since $\int_{S_1}\Phi_{1,\mu}(\zeta,z) d\zeta+\int_{S_2}\Phi_{1,\mu}(\zeta,z) d\zeta=0$. We remind, here $C_\epsilon$ is the circle of radius $1+\epsilon, \epsilon>0$, with the center in the point $\zeta=0$.

We consider the case $\mu_I=0, \mu_R=1-m_1, m_1=0,1,2,3,\dots$. In this case the function $\Phi_{1,\mu}(\zeta,z)$ has one singular point $\zeta=1$ which is the pole of the first order. According to the principal residue theorem we have
\begin{equation*}
  \int_{C_\epsilon}\Phi_{1,\mu}(\zeta,z) d\zeta=
  2\pi i\,\mbox{Res}\left[\frac{1}{2\pi i}\frac{e^{\zeta z}(\zeta z)^{m_1}}{\zeta-1}, \zeta=1\right]=e^zz^{m_1},\quad m_1=0,1,2,3,\dots
\end{equation*}
Using this result in (\ref{eq:MLF_int_rho=1_mu=n}), we obtain
\begin{equation*}
  E_{1,1-m_1}(z)=e^zz^{m_1},\quad m_1=0,1,2,3,\dots.
\end{equation*}
Taking into consideration that $\mu=1-m_2$, we obtain that the parameter $\mu$ can take only real  integer number values. Designating $\mu\equiv n$, we definitely obtain
\begin{equation*}
  E_{1,n}(z)=e^zz^{1-n},\quad n=1,0,-1,-2,-3,\dots.
\end{equation*}

Now we consider the case $\mu_I=0, \mu_R=1+m_2, m_2=1,2,3,\dots$. In this case the function $\Phi_{1,\mu}(\zeta,z)$ has two singular points: the point $\zeta=1$ is a pole of the first order and the point $\zeta=0$ is a pole of order $m_2$. According to the principal residue theorem for (\ref{eq:MLF_int_rho=1_mu=n}) we obtain
\begin{equation}\label{eq:MLF_case_rho=1_mu>1}
  E_{1,1+m_2}(z)=\int_{C_\epsilon}\Phi_{1,1+m_2}(\zeta,z)d\zeta\\
  =2\pi i\, \left(\mbox{Res}\left[\Phi_{1,1+m_2}(\zeta,z),\zeta=0\right] + \mbox{Res}\left[\Phi_{1,1+m_2}(\zeta,z),\zeta=1\right]\right).
\end{equation}

The residue in the point $\zeta=1$ has the form
\begin{equation}\label{eq:Phi_res_zeta=1}
  \mbox{Res}\left[\Phi_{1,1+m_2}(\zeta,z),\zeta=1\right]=
  \frac{1}{2\pi i}\lim_{\zeta\to1}(\zeta-1)\frac{e^{\zeta z} (\zeta z)^{-m_2}}{\zeta-1}=\frac{1}{2\pi i} e^z z^{-m_2}.
\end{equation}

For the residue in the point $\zeta=0$ we have
\begin{equation}\label{eq:Phi_res_zeta=0}
  \mbox{Res}\left[\Phi_{1,1+m_2}(\zeta,z),\zeta=0\right]
  = \frac{z^{-m_2}}{2\pi i}\frac{1}{(m_2-1)!}\lim_{\zeta\to0}\frac{d^{m_2-1}}{d\zeta^{m_2-1}}\frac{e^{\zeta z}}{\zeta-1},\quad m_2=1,2,3,\dots.
\end{equation}
To calculate the derivative of order  $m_2-1$ for an arbitrary value $m_2$, we use the method of mathematical induction. For $m_2=1$, we obtain
\begin{equation*}
  \frac{d^0}{d\zeta^0}\left[ \frac{e^{\zeta z}}{\zeta-1}\right]= \frac{e^{\zeta z}}{\zeta-1}.
\end{equation*}
For $m_2=2$
\begin{equation*}
  \frac{d}{d\zeta} \left[\frac{e^{\zeta z}}{\zeta-1}\right]= e^{\zeta z}\left(\frac{z}{\zeta-1}-\frac{1}{(\zeta-1)^2}\right).
\end{equation*}
For $m_2=3$
\begin{equation*}
  \frac{d^2}{d\zeta^2} \left[\frac{e^{\zeta z}}{\zeta-1}\right]= e^{\zeta z}\left(\frac{z^2}{\zeta-1}-\frac{2z}{(\zeta-1)^2}+ \frac{2}{(\zeta-1)^3}\right).
\end{equation*}
Continuing in a similar way, we can understand that for an arbitrary $m_2$ the derivative has the form
\begin{equation}\label{eq:derivative_Phi_n}
  \frac{d^{m_2-1}}{d\zeta^{m_2-1}} \left[\frac{e^{\zeta z}}{\zeta-1}\right]=
  e^{\zeta z}\sum_{k=0}^{m_2-1}(-1)^{m_2-1-k}\frac{(m_2-1)!}{k!}\frac{z^k}{(\zeta-1)^{m_2-k}}.
\end{equation}

Let us assume that this formula is true for some $m_2=n$. According to the method of mathematical induction, if this formula turns out to be true for $m_2=n+1$ as well, then it will be true for an arbitrary value $m_2$. Let $m_2=n+1$, then
\begin{multline}\label{eq:derivative_Phi}
  \frac{d^n}{d\zeta^n} \left[\frac{e^{\zeta z}}{\zeta-1}\right]=\frac{d}{d\zeta}\left(\frac{d^{n-1}}{d\zeta^{n-1}} \left[\frac{e^{\zeta z}}{\zeta-1}\right]\right)
  =\frac{d}{d\zeta} \left(e^{\zeta z}\sum_{k=0}^{n-1}(-1)^{n-1-k}\frac{(n-1)!}{k!}\frac{z^k}{(\zeta-1)^{n-k}}\right)\\
  =e^{\zeta z}\left(\sum_{k=0}^{n-1}(-1)^{n-1-k}\frac{(n-1)!}{k!}\frac{z^{k+1}}{(\zeta-1)^{n-k}}
  + \sum_{k=0}^{n-1}(-1)^{n-1-k}\frac{(n-1)!}{k!}\frac{(-1)(n-k)}{(\zeta-1)^{n-k+1}}z^k\right).
\end{multline}
Next, it is necessary to expand the resulting two sums. For this, we write the summands with $k=0,1,\dots,k-1,\dots,n-2,n-1$ from the first sum, and from the second sum the summands with $k=0,1,2,\dots,k,\dots,n-1$. Such a selection of values $k$ in each of the sums is determined by the fact that we need to have in the resulting sum only  the summands with the equal power $z$ and $(\zeta-1)$. As a result, we obtain
\begin{multline}\label{eq:derivative_Phi_tmp1}
  \frac{d^n}{d\zeta^n} \left[\frac{e^{\zeta z}}{\zeta-1}\right]=
  e^{\zeta z}\left((-1)^{n-1}\frac{(n-1)!}{0!}\frac{z}{(\zeta-1)^n}+  (-1)^{n-2}\frac{(n-1)!}{1!}\frac{z^2}{(\zeta-1)^{n-1}}\right.\\
  +\dots+(-1)^{n-k}\frac{(n-1)!}{(k-1)!}\frac{z^k}{(\zeta-1)^{n-k+1}}+\dots+
  (-1)\frac{(n-1)!}{(n-2)!}\frac{z^{n-1}}{(\zeta-1)^{2}}\\
  + (-1)^{0}\frac{(n-1)!}{(n-1)!}\frac{z^n}{(\zeta-1)^1}+
  (-1)^{n-1}\frac{(n-1)!}{0!}\frac{(-n)}{(\zeta-1)^{n+1}}\\
  +(-1)^{n-2}\frac{(n-1)!}{1!}\frac{(-1)(n-1)}{(\zeta-1)^{n}}z+
  (-1)^{n-3}\frac{(n-1)!}{2!}\frac{(-1)(n-2)}{(\zeta-1)^{n-1}}z^2+\dots+\\
  \left.(-1)^{n-k-1}\frac{(n-1)!}{k!}\frac{(-1)(n-k)}{(\zeta-1)^{n-k+1}}z^k+\dots+
  (-1)^{0}\frac{(n-1)!}{(n-1)!}\frac{(-1)}{(\zeta-1)^{2}}z^{n-1}\right).
\end{multline}

Now combining the summands with equal powers $z$ in this sum, it is clear that the summand with index $k$ of the first sum in (\ref{eq:derivative_Phi}) is combined with the summand with index $k+1$ of the second sum.  At the same time, the summands with $z^0$ and $z^n$ there is no pairwise summand. As a result, for the summand with the multiplier $z^k$ we have
\begin{multline*}
  (-1)^{n-k}\frac{(n-1)!}{(k-1)!}\frac{z^k}{(\zeta-1)^{n-k+1}}+
  (-1)^{n-k-1}\frac{(n-1)!}{k!}\frac{(-1)(n-k)}{(\zeta-1)^{n-k+1}}z^k\\
  =(-1)^{n-k}(n-1)!\frac{z^k}{(\zeta-1)^{n-k+1}}\left(\frac{1}{(k-1)!}+\frac{n-k}{k!}\right)\\
  =(-1)^{n-k}(n-1)!\frac{z^k}{(\zeta-1)^{n-k+1}}\left(\frac{k+n-k}{k!}\right)=
  (-1)^{n-k}\frac{n!}{k!}\frac{z^k}{(\zeta-1)^{n-k+1}}.
\end{multline*}
As a result, the expression (\ref{eq:derivative_Phi_tmp1}) takes the form
\begin{multline*}
  \frac{d^n}{d\zeta^n} \left[\frac{e^{\zeta z}}{\zeta-1}\right]=
  e^{\zeta z}\left((-1)^n\frac{n!}{0!}\frac{1}{(\zeta-1)^{n+1}}+
  (-1)^{n-1}\frac{n!}{1!}\frac{z}{(\zeta-1)^n}
  +  (-1)^{n-2}\frac{n!}{2!}\frac{z^2}{(\zeta-1)^{n-1}}+\dots\right.\\
  +(-1)^{n-k}\frac{n!}{k!}\frac{z^k}{(\zeta-1)^{n-k+1}}+\dots
  +(-1)\frac{n!}{(n-1)!}\frac{z^{n-1}}{(\zeta-1)^{2}}+
  \left.(-1)^{0}\frac{n!}{n!}\frac{z^n}{(\zeta-1)}\right)=\\
  e^{\zeta z}\sum_{k=0}^{n}(-1)^{n-k}\frac{n!}{k!}\frac{z^k}{(\zeta-1)^{n-k+1}}.
\end{multline*}
From this it is clear that the expression obtained is nothing more than the expression (\ref{eq:derivative_Phi_n}) written for $m_2=n+1$. Consequently, the validity of the formula(\ref{eq:derivative_Phi_n}) is proved.

Now using (\ref{eq:derivative_Phi_n}) in (\ref{eq:Phi_res_zeta=0}) we obtain
\begin{equation*}
  \mbox{Res}\left[\Phi_{1,1+m_2}(\zeta,z),\zeta=0\right]=
  -\frac{z^{-m_2}}{2\pi i}\sum_{k=0}^{m_2-1}\frac{z^k}{k!}.
\end{equation*}
Now putting this expression and (\ref{eq:Phi_res_zeta=1}) in (\ref{eq:MLF_case_rho=1_mu>1}) we get
\begin{equation*}
  E_{1,1+m_2}(z)=2\pi i\left(\frac{e^z z^{-m_2}}{2\pi i}-\frac{z^{-m_2}}{2\pi i}\sum_{k=0}^{m_2-1}\frac{z^k}{k!}\right)=
  \frac{1}{z^{m_2}}\left(e^z-\sum_{k=0}^{m_2-1}\frac{z^k}{k!}\right),
\end{equation*}
where $m_2=1,2,3,\dots$.  Since in the considered case $\mu=1+m_2$, then this means that $\mu$ is real and takes only  integer number values.  Designating $\mu\equiv n$, we definitely obtain
\begin{equation*}
  E_{1,n}(z)=z^{1-n}\left(e^z-\sum_{k=0}^{n-2}\frac{z^k}{k!}\right),\quad n=2,3,4,\dots.
\end{equation*}
\end{proof}

The proved corollary gives the expression for the Mittag-Leffler function in the case $\rho=1$ and  integer number values of the parameter $\mu$. It should be noted that the formulas (\ref{eq:MLF_mu<=1}) and (\ref{eq:MLF_mu>=2}) can be obtained directly from the formula (\ref{eq:MLF_gen}). We will formulate this result as a remark.

\begin{remark}\label{rem:MLF_case_rho=1}
  Let $\rho=1$, and $\mu=n$, where $n$ - integer numbers ($n=0,\pm1,\pm2,\pm3,\dots$). Using these values in  (\ref{eq:MLF_gen}) and substituting the summation index $k+n\to m+1$, we obtain  \begin{equation}\label{eq:MLF_sum_any_n}
    E_{1,n}(z)=\sum_{k=0}^{\infty}\frac{z^k}{\Gamma(k+n)}=z^{1-n}\sum_{m=n-1}^{\infty}\frac{z^m}{\Gamma(m+1)}.
  \end{equation}
  From this expression one can see that it is necessary to consider three cases $n<1,n=1,n>1$.

In fact, in the case $n<1$ the lower limit of the sum is negative. This leads to the appearance of summands with a negative argument in the sum in the function $\Gamma(m+1)$.  We represent  $n=\sgn(n)|n|$, where
  \begin{equation*}
    \sgn(x)=\left\{\begin{array}{cc}
                      1, & x\geqslant0, \\
                      -1, & x<0.
                    \end{array}\right.
  \end{equation*}
  Dividing now in (\ref{eq:MLF_sum_any_n}) the sum by two sums one of which with the summation index $m$ that takes only negative values but the second sum has non-negative values, we get
  \begin{equation*}
    E_{1,n}(z)= z^{1-\sgn(n)|n|}\hspace{-5mm}\sum_{m=\sgn(n)|n|-1}^{\infty}\frac{z^m}{\Gamma(m+1)} = z^{1-\sgn(n)|n|} \left(\sum_{m=\sgn(n)|n|-1}^{-1}\frac{z^m}{\Gamma(m+1)} + \sum_{m=0}^{\infty}\frac{z^m}{\Gamma(m+1)}\right),
  \end{equation*}
  where $n<1$. Thus, the first sum in brackets contains the summands that have $m+1=0,-1,-2,-3,\dots$ at $n=0,-1,-2,-3$, but the second sum contains the summands only with positive values  $m+1$. Taking into consideration that  $1/\Gamma(x)=0$, if  $x=0,-1,-2,-3,\dots$ and $\Gamma(m+1)=m!$, we obtain
  \begin{equation}\label{eq:MLF_sum_n<1}
    E_{1,n}(z)=z^{1-n}e^z, \quad n<1.
  \end{equation}

  In case $n=1$ from (\ref{eq:MLF_sum_any_n}) it directly follows
  \begin{equation}\label{eq:MLF_sum_n=1}
    E_{1,1}(z)=e^z.
  \end{equation}
  Now combining the formulas (\ref{eq:MLF_sum_n<1}) and (\ref{eq:MLF_sum_n=1}) we obtain the formula (\ref{eq:MLF_mu<=1}).

  We consider the case  $n>1$. Adding and subtracting in (\ref{eq:MLF_sum_any_n}) the summand $z^{1-n}\sum_{m=0}^{n-2}z^m/\Gamma(m+1)$. we get
  \begin{equation*}
    E_{1,n}(z)=z^{1-n}\left(\sum_{m=n-1}^{\infty}\frac{z^m}{\Gamma(m+1)}+\sum_{m=0}^{n-2}\frac{z^m}{\Gamma(m+1)}- \sum_{m=0}^{n-2}\frac{z^m}{\Gamma(m+1)}\right)=z^{1-n}\left(e^z-\sum_{m=0}^{n-2}\frac{z^m}{m!}\right).
  \end{equation*}
  Thus, the obtained result coincides completely with (\ref{eq:MLF_mu>=2}).
\end{remark}

The coincidence of the results of the remark that has just been given and results of the corollary~\ref{coroll:MLF_case_rho=1} confirms the validity of the lemma~\ref{lemm:MLF_int}.

It should be pointed out that the corollary~\ref{coroll:MLF_case_rho=1} exhausts all possible situations when one can get an explicit form for the Mittag-Leffler function at $\rho>1/2$. In fact, as it follows from the theorem~\ref{lemm:MLF_SingPoints} If $\rho$ is not  integer or $\mu_I\neq0$ or $\mu_R$ is not an integer number, then the point $\zeta=0$ will be a branch point of the integrand (\ref{eq:MLF_int}). If the conditions of the theorem~\ref{lemm:MLF_SingPoints} are met (i. e. $\rho$ and $\mu_R$ are integer numbers and $\mu_I=0$), but $\rho>1$, then in this case the contour $\gamma_\zeta$ defined by (\ref{eq:loop_gammaZeta}) will not close up. Therefore, in all remaining cases of parameter values $\rho, \mu, \delta_{1\rho}, \delta_{2\rho}$ that do not satisfy the condition of the corollary~\ref{coroll:MLF_case_rho=1} it is necessary to use the representation (\ref{eq:MLF_int}).

\section{Conclusion}

At the moment, the most used form of the integral representation of the function $E_{\rho,\mu}(z)$ is the representation (\ref{eq:MLF_int_Dzhr}). This representation was obtained for the first time in the work \cite{Dzhrbashyan1954_eng} and later was introduced in the book \cite{Dzhrbashyan1966_eng}. Using this integral representation in the papers \cite{Dzhrbashyan1954_eng,Dzhrbashyan1966_eng} asymptotic properties were investigated and a number of asymptotic formulas were obtained. Later, this integral representation formed the basis of a number of works  \cite{Gorenflo1991, Gorenflo2002b,Seybold2005,Hilfer2006},  and used in the books \cite{Dzhrbashian1993,Gorenflo2014}. However, a detailed study of the proof presented in the work  \cite{Dzhrbashyan1954_eng} and in the book \cite{Dzhrbashyan1966_eng} (see chapter 3, \S 2, lemma 3.2.1 in \cite{Dzhrbashyan1966_eng}), showed that  the integral representation (\ref{eq:MLF_int_Dzhr}) is not true. \emph{In this integral representation, the case $z\in G^{(+)}(\varepsilon,\delta)$ is erroneous.  The results related to the case $z\in G^{(-)}(\varepsilon,\delta)$ will be true.} A more detailed description of the essence of the problem and a description of the error can be found in the work \cite{Saenko2020}. In this regard, the results of the abovementioned works should be used with caution, as well as the results of other works in which the representation (\ref{eq:MLF_int_Dzhr}) is used.
To eliminate the mistake made in the works \cite{Dzhrbashyan1954_eng,Dzhrbashyan1966_eng} in the works by \cite{Saenko2019, Saenko2020} a theorem was formulated and proved that gives a correct integral representation for the Mittag-Leffler function. In this paper, this result is presented in the lemma~\ref{lemm:MLF_int}. The proof of this lemma can be found in the work \cite{Saenko2020}. This paper presents the results of a study of some properties of the integral representation  (\ref{eq:MLF_int}). In particular, the singular points of this integral representation have been defined. It has been obtained that the point $\zeta=1$ is a pole of the first order, and the point $\zeta=0$, depending on parameter values $\rho$ and $\mu$, can be either a pole, or a branch point, or a regular point. Further studies have shown that at value $\rho=1$ and integer $\mu$   the integral in (\ref{eq:MLF_int}) can be calculated  using the theory of residues. The result obtained is formulated in the corollary~\ref{coroll:MLF_case_rho=1}. Direct calculation of the sum in (\ref{eq:MLF_gen}) at $\rho=1$ and integer real $\mu$ shows that the result obtained coincides completely with the result of the corollary~\ref{coroll:MLF_case_rho=1}. This fact indicates the correctness of the integral representation of the function $E_{\rho,\mu}(z)$ formulated in the lemma~\ref{lemm:MLF_int}.

As further plans for the study of the integral representation (\ref{eq:MLF_int}), it is planned to complete the transition from integration over a complex variable to integration over real variables. This will allow us to represent the contour integral as the sum of definite and improper integrals and, using numerical integration methods, calculate these integrals. However, all this requires additional research beyond the scope of this work.

\AcknowledgementSection
This work was supported by the Russian Foundation for Basic Research (projects No 19-44-730005, 18-51-53018 and 20-07-00655).

The author thanks to M.~Yu.~Dudikov for translation the article into English.

\bibliographystyle{unsrt}
\bibliography{d:/bibliography/library}

\end{document}